\newtheorem{theorem}{Theorem}[section]
\newtheorem{lemma}[theorem]{Lemma}
\theoremstyle{definition}
\newtheorem{definition}[theorem]{Definition}
\theoremstyle{remark}
\numberwithin{equation}{section}
\renewcommand{\vec}[1]{\boldsymbol{#1}}
\newcommand{\Z}{\mathbb Z}
\newcommand{\R}{\mathbb R}
\newcommand{\MNR}{{\mathcal R}}
\newcommand{\MV}{{\mathcal V}}
\newcommand{\GL}{\mathsf{GL}}
\newcommand{\gldr}{\GL_d(\R)}
\newcommand{\sd}{{\mathcal S}^d}
\newcommand{\sdm}{{\mathcal S}^{d,m}}
\newcommand{\sdo}{{\mathcal S}^d_{>0}}
\newcommand{\sdmo}{{\mathcal S}^{d,m}_{>0}}
\DeclareMathOperator{\trace}{trace}
\DeclareMathOperator{\vol}{vol}
\DeclareMathOperator{\cone}{cone}
\DeclareMathOperator{\Min}{Min}
\DeclareMathOperator{\relint}{relint}
\DeclareMathOperator{\grad}{grad}
\begin{document}

\title{Strict Periodic Extreme Lattices}

\author{Achill Sch\"urmann}
\address{Institute of Mathematics, University of Rostock, 18051 Rostock, Germany}
\email{achill.schuermann@uni-rostock.de}

\subjclass[2000]{Primary 52C17, Secondary 11H55}



\begin{abstract}
A lattice is called periodic extreme if it cannot locally be modified to yield
a better periodic sphere packing. It is called strict periodic extreme if
its sphere packing density is 
an isolated local optimum among periodic point sets.
In this note we show that a lattice is periodic extreme if and only if it is extreme,
that is, locally optimal among lattices. 
Moreover, we show that a lattice 
is strict periodic extreme if and only if it is extreme and non-floating.
\end{abstract}

\maketitle

\section{Introduction}

The {\em sphere packing problem}
asks for a non-overlapping arrangement of equally sized spheres, 
such that the fraction of space covered by spheres is maximized.
In dimension~$d$ equal to~$2$ and~$3$ optimal arrangements of spheres are given by
{\em lattices}, that is, by discrete subgroups of $\R^d$ (see~\cite{hales-2005}).
Optimal lattice sphere packings are known in dimensions $d\leq 8$ and $d=24$
(see \cite{cs-1998} and \cite{ck-2009}).
Although it seems highly likely, it is open whether or not non-lattice sphere packings with higher density exit for some $d\geq 4$.
All of the best known sphere packings so far are either given by lattices or   
{\em periodic point sets}, that is,
by a finite union of translates of a lattice.
These point sets are known to have densities 
that at least comes arbitrarily close to the density of best possible arrangements.

A natural idea to obtain good periodic sphere packings 
is to ``locally improve'' the best known lattice arrangements. 
Extending results in~\cite{schuermann-2010}, we show in this note that
a local improvement is not possible for locally optimal lattice sphere
packings (extreme lattices).
We show that extreme lattices yield even strict local sphere packing optima 
among periodic packings if they are {\em non-floating}. 
These packings do not allow simultaneous movements of spheres
(of some sublattice) without overlappings.

The paper is organized as follows. In Section~\ref{sec:recall}
we briefly recall some necessary terminology and results from~\cite{schuermann-2010} . 
We then extend these results In Section~\ref{sec:new}.

\section{A parameter space for periodic sets} \label{sec:recall}

An {\em $m$-periodic set} 
in $\R^d$ is the union of $m$ translates of a {\em lattice}~$L$
(a full rank discrete subgroup of $\R^d$): 
$$
\Lambda' = \bigcup_{i=1}^{m} \left( \vec{t}'_i + L \right)
,
$$
with translation vectors $\vec{t}'_i\in \R^d$, $i=1,\ldots ,m$.
The periodic set $\Lambda'$ is 
a linear image $\Lambda'=A\Lambda_{\vec{t}}$ of a {\em standard periodic set}
$$
\Lambda_{\vec{t}}
=
\bigcup_{i=1}^{m} \left( \vec{t}_i + \Z^d \right)
.
$$
Here, $A\in\gldr$ satisfies in particular $L=A\Z^d$.
Since we are only interested in properties of 
periodic sets up to isometries, 
we encode $\Lambda'$ by
the positive definite matrix $Q=A^tA$, 
together with the $m$ translation vectors $\vec{t}_1,\ldots, \vec{t}_m$.
Moreover,  without loss of generality
we may assume $\vec{t}_m=\vec{0}$.

We identify the $\binom{d+1}{2}$-dimensional
Euclidian vector space~$\sd$ of symmetric  $d\times d$ matrices
with the space of quadratic forms in $d$~variables.
The convex cone $\sdo$ of positive definite matrices is identified with the
space of {\em positive definite quadratic forms (PQFs)}.
For $m$-periodic point sets up to isometries we 
therefore consider the parameter space
$$
\sdmo = \sdo \times \R^{d\times (m-1)}
.
$$
For an extended discussion of this space and its symmetries we refer to
\cite{schuermann-2010} and \cite{cs-2011}.
Elements of $\sdmo$ are referred to as {\em periodic forms}.
Note that a given $m$-periodic set has infinitely many 
representations by periodic forms, as we may not only choose 
different $m$ (and therefore lattices~$L$), but we may also 
vary the basis~$A$ for the underlying lattice~$L$.

The space $\sdmo$ is contained in the Euclidean space  
$$
\sdm = \sd \times \R^{d\times (m-1)}
$$
with inner product $\langle \cdot , \cdot \rangle$, 
defined for $X=(Q,\vec{t})$ and $X'=(Q',\vec{t}')$ by
$$   \label{not:inner_product_sdm}
\langle X, X'\rangle = \langle Q, Q' \rangle + \sum_{i=1}^{m-1} \vec{t}_i^t \vec{t}'_i
.
$$
Here we use the usual inner product $\langle A, B \rangle = \trace (AB)$ on $\sd$.

The {\em generalized arithmetical minimum} of a periodic form $X=(Q,\vec{t})\in\sdmo$ 
is given by
$$   
\lambda(X) 
=
\min \{ 
Q[\vec{t}_i-\vec{t}_j-\vec{v}] : 
1\leq i,j \leq m \mbox{ and } \vec{v}\in \Z^d, \mbox{ with } \vec{v}\not=\vec{0} \mbox{ if } i=j \}
.
$$
It corresponds to the squared minimum distance among points of a periodic set represented by $X$.
The set $\Min X$ 
of {\em representations of the minimum} 
is the set of all 
$\vec{w}=\vec{t}_i-\vec{t}_j-\vec{v}$    
attaining $\lambda(X)$.
The sphere packing density of a periodic form~$X=(Q,\vec{t})$
and a corresponding $m$-periodic point set is
$$
\delta(X)
=
\left(\frac{\lambda (X)}{(\det Q)^{1/d}}\right)^{\frac{d}{2}} m \vol B^d / 2^d
.
$$

\begin{definition}
A periodic form~$X=(Q,\vec{t})\in \sdmo$ 
(and a corresponding periodic set represented by $X$)
is called {\em $m$-extreme} if it 
attains a local maximum of~$\delta$ within~$\sdmo$.
$X$ is called {\em isolated m-extreme}, if any sufficiently small change 
preserving $\lambda(X)$, necessarily lowers $\delta(X)$.
\end{definition}

For the study of $m$-extreme periodic forms in~$\sdmo$
we consider the {\em (generalized) Ryshkov set}
$$
\MNR_{m}
=
\left\{
X\in\sdmo : \lambda(X)\geq 1
\right\}
.
$$
Its boundary contains the periodic forms with 
generalized arithmetical minimum equal to~$1$.
This boundary is given by the linear inequalities
$$
p_{\vec{v}}(X) := Q[\vec{v}] = \langle X , (\vec{v}\vec{v}^t,0) \rangle \geq 1
$$
for $\vec{v}\in\Z^d\setminus\{\vec{0}\}$,  
and by the degree~$3$~polynomial inequalities
$$
p_{i,j,\vec{v}}(X)
:=
Q[\vec{t}_i-\vec{t}_j-\vec{v}] 
\geq 1
,
$$
for $i,j\in\{1,\ldots, m\}$ with $i\not=j$ and $\vec{v}\in\Z^d$. 
Our necessary and sufficient conditions of local sphere packing optimality
rely on a local linear approximation, based on the gradients 
\begin{equation}  \label{eqn:pijv-gradient}
(\grad p_{i,j,\vec{v}})(X) = 
(
\vec{w}\vec{w}^t,
\vec{0},\dots,\vec{0},
2Q\vec{w},
\vec{0},\dots,\vec{0},
-2Q\vec{w},
\vec{0},\dots,\vec{0}
)
.
\end{equation}
Here, $\vec{w}$ abbreviates $\vec{t}_i-\vec{t}_j-\vec{v}$
and the gradient should be understood as a 
vector in $\sdm=\sd\times \R^{d\times (m-1)}$
having an ``$\sd$-component'' 
$\vec{w}\vec{w}^t$
and a ``translational-component'' containing the zero-vector~$\vec{0}$ 
in all but the $i$th and $j$th column.
Because of the symmetry
$p_{i,j,\vec{v}}=p_{j,i,-\vec{v}}$ we may
restrict our attention to polynomials with $i\leq j$.
If $j=m$, the $j$th column is omitted and if $i=j$
the corresponding column is~$\vec{0}$.

The following definitions and the subsequent theorem from~\cite{schuermann-2010} 
generalize corresponding classical notions for lattices (see \cite{martinet-2003}, \cite{schuermann-2009}).

\begin{definition}
A periodic form~$X=(Q,\vec{t})\in \sdmo$ 
(and a corresponding periodic set represented by $X$)
is {\em $m$-perfect} if the {\em generalized Voronoi domain}
$$
\MV(X)
= \cone
\{(\grad p_{i,j,\vec{v}})(X) : \vec{t}_i-\vec{t}_j-\vec{v}\in \Min X 
    \mbox{ for some } \vec{v}\in\Z^d \}
$$
is full dimensional, that is, if $\dim \MV(X)= \dim \sdm = \binom{d+1}{2} + (m-1)d$. 

\medskip

A periodic form~$X=(Q,\vec{t})\in \sdmo$ 
(and a corresponding periodic set represented by $X$)
is {\em $m$-eutactic} if $(Q^{-1},\vec{0})$ 
is contained in the relative interior $\relint \MV(X)$ of~$\MV(X)$.
\end{definition}

\begin{theorem}  \label{thm:m-extreme-characterization}
If a periodic form~$X\in \sdmo$ is $m$-perfect and $m$-eutactic, then
$X$ is isolated $m$-extreme. 
\end{theorem}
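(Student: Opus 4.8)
The plan is to mimic the classical argument (Voronoi's theorem for extreme lattices) in the enlarged parameter space $\sdm$, working on the Ryshkov set $\MNR_m$ rather than directly with $\delta$. First I would argue that, near a given periodic form $X_0$ with $\lambda(X_0)=1$, maximizing $\delta$ subject to no decrease in $\lambda$ is governed by the behaviour of $\det Q$ on the boundary of $\MNR_m$: any local perturbation $X_0+\epsilon Y$ that keeps $\delta$ at least as large must, to first order, not increase $\det Q$ while staying in $\MNR_m$. Concretely, $X$ lies in (the relevant face of) $\MNR_m$ near $X_0$ precisely when $\langle Y,(\grad p_{i,j,\vec v})(X_0)\rangle\ge 0$ for all minimal vectors, by the linear/cubic inequalities $p_{\vec v},p_{i,j,\vec v}\ge 1$ and the gradient formula \eqref{eqn:pijv-gradient}; the cone of such admissible directions $Y$ is exactly the polar of $\MV(X_0)$. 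On the other hand, $\grad(\log\det Q)$ at $X_0$, viewed as an element of $\sdm$, equals $(Q^{-1},\vec 0)$ up to a positive scalar — this is the standard identity $\partial \log\det Q = \langle Q^{-1},\,\cdot\,\rangle$, with the translational components contributing nothing since $\det Q$ does not depend on $\vec t$.

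With these two observations in hand, the key step is: if $X_0$ is $m$-perfect and $m$-eutactic, then $(Q^{-1},\vec 0)\in\relint\MV(X_0)$, so $(Q^{-1},\vec 0)$ lies in the interior of a full-dimensional cone generated by the $(\grad p_{i,j,\vec v})(X_0)$. I would then invoke the elementary convex-geometry fact that a vector in the interior of a finitely generated full-dimensional cone has \emph{strictly} negative inner product with every nonzero element of the polar cone. Hence for every admissible direction $Y\neq 0$ (one that keeps $\lambda\ge 1$ to first order) we get $\langle Y,(Q^{-1},\vec 0)\rangle > 0$, i.e. $\det Q$ strictly increases along every such direction; equivalently, to keep $\det Q$ from increasing one is forced to leave $\MNR_m$, which decreases $\lambda$. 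Rescaling back to fixed $\lambda$, this says $\delta$ strictly decreases. A short second-order bookkeeping argument (the constraint cone is the tangent cone of a genuinely curved region because of the cubic terms, but full-dimensionality of $\MV(X_0)$ makes the linear part dominate) upgrades "first order" to an honest local statement on a neighbourhood, giving isolated $m$-extremality.

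The main obstacle I anticipate is precisely this passage from the first-order (linearized) picture to a genuine local statement: $\MNR_m$ is cut out near $X_0$ by cubic inequalities $p_{i,j,\vec v}$, not linear ones, so its tangent cone at $X_0$ is the polar of $\MV(X_0)$ but the set itself is curved, and one must control the quadratic remainder uniformly on a neighbourhood. The $m$-perfectness hypothesis is what saves the argument: since $\MV(X_0)$ is full-dimensional, its polar cone is pointed, so there is a uniform angular gap — $\langle Y,(Q^{-1},\vec 0)\rangle \ge c\,\|Y\|$ for admissible $Y$ with some $c>0$ — which dominates the $O(\|Y\|^2)$ error from the cubic terms and from the nonlinearity of $\lambda$ and $\det$ for $\|Y\|$ small. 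One also has to handle the finitely many inequalities indexed by \emph{non}-minimal vectors $\vec v$: these are strict at $X_0$, hence remain satisfied on a small neighbourhood and do not affect the local analysis, so only the minimal ones, which define $\MV(X_0)$, matter. Assembling these pieces gives the theorem.
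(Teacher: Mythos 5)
Your proposal is essentially the argument of the cited source: the paper itself does not prove Theorem~\ref{thm:m-extreme-characterization} but imports it from \cite{schuermann-2010}, where the proof is exactly this Ryshkov-set reduction (maximize $\delta$ $\leftrightarrow$ minimize $\det$ on $\MNR_m$), with $m$-eutaxy placing $(Q^{-1},\vec 0)$ in the interior of the full-dimensional cone $\MV(X)$ and $m$-perfection supplying the uniform first-order gap $\langle N,(Q^{-1},\vec 0)\rangle\geq c\|N\|-C\|N\|^2$ that dominates the quadratic errors coming from the cubic constraints $p_{i,j,\vec v}$. You correctly identify the one delicate point (feasible perturbations satisfy the gradient inequalities only up to $O(\|N\|^2)$, not exactly), and the mechanism you name for resolving it is the one used there, so the proposal is correct and takes the same route.
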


\section{Characterizing strict periodic extreme sets} \label{sec:new}

In this section we derive characterizations of 
strict and weak local optimality of lattices among periodic sets
that are independent of~$m$ and the concrete realization as a periodic form.

\begin{definition}
A periodic point set is {\em (strict) periodic extreme} if it is (isolated) $m$-extreme for all possible representations~$X\in \sdmo$. 
\end{definition}

The following characterization of periodic extreme lattices is a strengthening
of Theorem~10 in~\cite{schuermann-2010}.

\begin{theorem} \label{thm:main-periodic}
A lattice 
is periodic extreme
if and only if it is extreme.
\end{theorem}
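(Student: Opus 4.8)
The plan is to prove both directions by relating $m$-extremeness of a lattice (viewed as a $1$-periodic form, or more generally as an $m$-periodic form with collinear translates) to classical extremeness of the lattice. The "only if" direction is immediate: a lattice $L$ represented by $Q\in\sdo$ is a periodic form $X=(Q,\vec 0,\dots,\vec 0)\in\sdmo$ for every $m$, and in particular for $m=1$ the space $\mathcal S^{1,d}_{>0}=\sdo$ is exactly the space of PQFs, so $1$-extremeness of $X$ is literally the statement that $L$ is extreme. Hence if $L$ is periodic extreme it is $1$-extreme, i.e. extreme.

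For the "if" direction, suppose $L$ is extreme; we must show that every periodic-form representation $X=(Q,\vec t)\in\sdmo$ of $L$ is $m$-extreme. First I would reduce to a canonical representation: since $L$ as a periodic point set equals $\bigcup_{i=1}^m(\vec t_i+L)$, the translates $\vec t_i$ must all be congruent mod $L$ (otherwise the point set is strictly larger than $L$), so after an integral change of basis and using $\vec t_m=\vec 0$ we may assume $X=(Q,\vec 0)$, i.e. all translation vectors are the origin. So it suffices to show: if $Q$ is an extreme PQF then the periodic form $X_0=(Q,\vec 0,\dots,\vec 0)\in\sdmo$ is $m$-extreme for all $m$. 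The key observation is that at $X_0$ every $\vec w=\vec t_i-\vec t_j-\vec v$ reduces to $-\vec v\in\Z^d$, so $\Min X_0$ as a set of vectors coincides with the classical set of minimal vectors $\Min Q$, and the degree-$3$ inequalities $p_{i,j,\vec v}$ collapse: their gradients \eqref{eqn:pijv-gradient} have $\mathcal S^d$-component $\vec v\vec v^t$ and translational component with entries $\pm 2Q\vec v$ in columns $i,j$. The plan is to use these mixed gradients to control the translational directions. Concretely I would show that perturbing $X_0$ in a translational direction either leaves $\lambda$ unchanged to first order (when it corresponds to a symmetry/floating direction) or strictly decreases it, while perturbation in the $\mathcal S^d$-directions is governed by extremeness of $Q$.

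The cleanest route is via the Ryshkov set $\mathcal R_m$ and a convexity/support argument. Since $Q$ is extreme, by the classical Voronoi theory $Q$ is perfect and eutactic, so $Q^{-1}\in\relint\cone\{\vec v\vec v^t:\vec v\in\Min Q\}$ in $\sd$. I would then argue that for the periodic form $X_0$, the point $(Q^{-1},\vec 0)$ lies in $\relint\mathcal V(X_0)$: the $\sd$-component is handled by eutaxy of $Q$, and one checks that for each pair $i<j$ the minimal vectors $\vec v$ together with their negatives $-\vec v$ (arising from $p_{j,i,-\vec v}$) contribute gradients whose translational components in columns $i$ and $j$ are $2Q\vec v$ and $-2Q\vec v$; averaging the gradient for $(i,j,\vec v)$ with that for $(i,j,-\vec v)$ cancels the translational part and recovers a positive multiple of $(\vec v\vec v^t,\vec 0)$, so $\cone\{(\vec v\vec v^t,\vec0):\vec v\in\Min Q\}\times\{\vec 0\}\subseteq\mathcal V(X_0)$ already shows the $\mathcal S^d\times\{0\}$ slice is hit in its relative interior. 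The remaining point is that $\mathcal V(X_0)$ need not be full-dimensional in $\sdm$ — a lattice can fail to be non-floating — so $X_0$ need not be $m$-perfect, and Theorem~\ref{thm:m-extreme-characterization} does not apply directly. This is the main obstacle. To get around it I would restrict to the subspace $W\subseteq\sdm$ spanned by $\mathcal V(X_0)$ and show: (i) any displacement of $X_0$ with a component orthogonal to $W$ that preserves $\lambda\ge1$ also preserves $\delta$ (these are the floating/isometry directions, along which $\det Q$ and $\lambda$ are both locally constant — this needs an argument that such orthogonal directions are exactly tangent directions to the orbit of trivial deformations), and (ii) within $W$, $X_0$ is "perfect and eutactic relative to $W$", so the Voronoi-style argument of Theorem~\ref{thm:m-extreme-characterization} applies verbatim to show $\delta$ has a local max along $W$. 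Combining (i) and (ii) gives a local max of $\delta$ on all of $\sdmo$, i.e. $m$-extremeness, for every $m$, completing the proof. The delicate part throughout is (i): identifying the orthogonal complement of $\mathcal V(X_0)$ with genuinely $\delta$-preserving deformations, which is where the structure of minimal vectors of a lattice (as opposed to a general periodic set) is essential.
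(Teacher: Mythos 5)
There is a genuine gap, and it occurs at the very first step of your ``if'' direction. You claim that in any representation $\bigcup_{i=1}^m(\vec{t}_i+L)$ of a lattice the translates $\vec{t}_i$ must all be congruent modulo $L$, so that one may assume $X=(Q,\vec{0},\dots,\vec{0})$. This is backwards: when a lattice $M$ is written as an $m$-periodic set with $m>1$, the underlying lattice $L$ of the representation is a \emph{proper sublattice} of $M$ of index $m$, and the $\vec{t}_i$ are the $m$ pairwise \emph{distinct} coset representatives of $M/L$. If all $\vec{t}_i$ were $\vec{0}$, then $p_{i,j,\vec{0}}(X)=Q[\vec{t}_i-\vec{t}_j]=0$ for $i\neq j$, so $\lambda(X)=0$ and $\delta(X)=0$: the point $(Q,\vec{0})$ is a degenerate periodic form with coincident translates, not a representation of $M$. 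The entire content of ``periodic extreme'' lies in the sublattice representations that your reduction erases, and consequently your identification of $\Min X_0$ with $\Min Q$ and the gradient-cancellation computation at $X_0$ do not address the actual objects. In a genuine representation, a minimal vector $\vec{w}$ of $M$ is realized as $\vec{t}_i-\vec{t}_j-\vec{v}$ only for specific pairs $(i,j)$ determined by which coset of $L$ the vector $\vec{w}$ maps into which; organizing these pairs (they form disjoint cycles on $\{1,\dots,m\}$) and summing the gradients over each cycle is exactly the mechanism of the paper's Lemma~\ref{lem:eutaxy}, which yields equation~\eqref{eqn:summing_gradients} and hence that every representation of a eutactic lattice is $m$-eutactic. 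That lemma is the real substitute for your averaging argument.

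Your second difficulty --- that $\MV(X)$ need not be full dimensional, so Theorem~\ref{thm:m-extreme-characterization} does not apply --- is correctly identified, but your proposed resolution (i) is only asserted and, as stated, is not quite right: the directions in $\MV(X)^{\perp}$ that preserve $\lambda$ are the \emph{floating} directions, which are in general genuine deformations of the packing (as in Conway and Sloane's fluid packings), not tangents to an orbit of isometries or other trivial deformations; what is true, and what requires proof, is merely that they do not \emph{increase} $\delta$. The paper resolves both issues at once: Lemma~\ref{lem:eutaxy} upgrades ``strongly eutactic'' to ``eutactic'' in the proof of Theorem~10 of \cite{schuermann-2010} (which already contains the analysis of the non-$m$-perfect case), and Voronoi's characterization of extreme lattices as the perfect and eutactic ones then gives the conclusion. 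The ``only if'' direction of your proposal, via $m=1$, agrees with the paper.
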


For the characterization of strict periodic extreme lattices we use 
the following definition, which goes back to Conway and Sloane (see~\cite{cs-1995}).

\begin{definition} \label{def:floating}
A periodic point set is called {\em floating} if there exists a representation
$$
\Lambda = \bigcup_{i=1}^{m} \left( \vec{t}_i + L \right)
$$
with a lattice~$L$
such that it is possible to continuously move a strict subset of the $m$~translates of~$L$, 
without lowering the minimum distance among elements in $\Lambda$.
\end{definition}

\begin{theorem} \label{thm:main-strict-periodic}
A lattice 
is strict periodic extreme
if and only if
it is extreme and non-floating.
\end{theorem}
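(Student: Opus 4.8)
The plan is to prove both directions by reducing the question about periodic forms to the classical lattice situation, using Theorem~\ref{thm:main-periodic} as the starting point. For the ``only if'' direction, suppose the lattice~$L$ is strict periodic extreme. Then in particular it is periodic extreme, hence extreme by Theorem~\ref{thm:main-periodic}. It remains to see that a strict periodic extreme lattice cannot be floating. So assume~$L$ is floating: by Definition~\ref{def:floating} there is a representation $\Lambda=\bigcup_{i=1}^m(\vec t_i+L)$, encoded by some periodic form $X=(Q,\vec t)\in\sdmo$, and a continuous motion of a proper subset of the translates that does not decrease the minimum distance, i.e.\ does not decrease $\lambda$. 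Such a motion is a curve in $\sdmo$ along which only the translational coordinates change, $\det Q$ is constant, and $\lambda(X)$ does not decrease; hence $\delta$ does not decrease along a nontrivial curve through~$X$. After rescaling to return to the level set $\lambda=\lambda(X)$ this gives a nonconstant path preserving $\lambda$ along which $\delta$ stays equal to $\delta(X)$, contradicting that $X$ is isolated $m$-extreme. Care is needed to argue that the motion stays in $\sdmo$ and that the "subset of translates" language really does produce a genuine curve and not a path that is stationary in the relevant quotient — this is where the bookkeeping of the symmetries of $\sdmo$ (relabeling translates, changing the lattice basis) has to be handled, exactly as in~\cite{schuermann-2010}.

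For the ``if'' direction, assume $L$ is extreme and non-floating; we must show every representation $X=(Q,\vec t)\in\sdmo$ of~$L$ (for every $m$) is isolated $m$-extreme. By Theorem~\ref{thm:main-periodic}, extremality of $L$ already gives that $X$ is $m$-extreme, i.e.\ $\delta$ attains a local maximum at~$X$ within $\sdmo$. So the only thing to rule out is a nontrivial direction (or curve) through~$X$ preserving $\lambda$ along which $\delta$ remains constant. The strategy is to classify such directions. Working with the linearization at~$X$ via the gradients $(\grad p_{i,j,\vec v})(X)$ from~\eqref{eqn:pijv-gradient}, a first-order-flat direction $Y=(Q',\vec t\,')$ must satisfy $\langle Y,(\vec w\vec w^t,0)\rangle=0$ (from $\lambda$ being preserved to first order, for all $\vec w\in\Min X$) together with the condition coming from $\det Q$ being preserved, namely $\langle Y,(Q^{-1},\vec 0)\rangle=0$; combined with $m$-eutaxy/$m$-perfectness-type information inherited from the lattice being extreme, one shows the only surviving flat directions are the ``purely translational'' ones. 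A purely translational flat curve through~$X$ is precisely a motion of (a subset of) the translates of~$L$ that preserves~$\lambda$ — and by genericity/averaging one can arrange it to move a proper subset — i.e.\ it exhibits~$L$ as floating, contradicting the hypothesis. Hence no nontrivial flat direction exists and $X$ is isolated $m$-extreme.

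The main obstacle I anticipate is the middle step of the ``if'' direction: showing that the only first-order-flat, volume-preserving, $\lambda$-preserving directions at an extreme lattice are the translational ones, and then upgrading this infinitesimal statement to an actual curve. Infinitesimally, a nonzero flat direction with a nontrivial $\sd$-component would, via the classical perfection/eutaxy characterization for lattices together with the structure of $\MV(X)$, have to conflict with local maximality of $\delta$ among lattices; making this precise requires relating $\MV(X)$ for the periodic form to the classical Voronoi domain of $Q$ and controlling the extra translational coordinates. Upgrading from the infinitesimal to the finite statement (so that "flat direction exists" genuinely yields "a curve along which $\delta$ is constant", and conversely that a floating motion is genuinely nontrivial in $\sdmo$) is the technical heart; I expect it to parallel the argument behind Theorem~\ref{thm:m-extreme-characterization} and the proof of Theorem~10 in~\cite{schuermann-2010}, using that the constraints defining $\MNR_m$ are real-analytic so that a direction that is flat to all orders integrates to an honest path. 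Once the translational-only classification is in place, the identification ``nontrivial $\lambda$-preserving $\delta$-flat curve through a representation of $L$'' $\Longleftrightarrow$ ``$L$ is floating'' closes both directions simultaneously.
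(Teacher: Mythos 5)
Your overall architecture matches the paper's: the ``only if'' direction is essentially immediate (a floating motion keeps $\det Q$ fixed and does not decrease $\lambda$, so it yields a nonconstant perturbation that does not lower $\delta$, contradicting isolated $m$-extremality), and for the ``if'' direction one shows that any $\lambda$-preserving, $\delta$-non-lowering change $N=(Q^N,\vec{t}^N)$ must have $Q^N=0$ (by perfection of the underlying lattice, since $\{\vec{w}\vec{w}^t:\vec{w}\in\Min X\}$ spans $\sd$) and then $\vec{t}^N=0$ (by non-floating). The paper additionally splits off the case where the representation is $m$-perfect, which is settled directly by Theorem~\ref{thm:m-extreme-characterization}.

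There is, however, a concrete gap at the heart of your ``if'' direction. You assert that a first-order-flat direction $Y=(Q',\vec{t}')$ satisfies $\langle Y,(\vec{w}\vec{w}^t,0)\rangle=0$ for all $\vec{w}\in\Min X$ ``from $\lambda$ being preserved to first order''. What flatness actually gives is $\langle Y,(\grad p_{i,j,\vec{v}})(X)\rangle=0$, and by~\eqref{eqn:pijv-gradient} this equals $Q'[\vec{w}]$ \emph{plus} translational cross-terms $2(\vec{t}'_i-\vec{t}'_j)^tQ\vec{w}$; a single such equation does not isolate $Q'[\vec{w}]$. The paper's mechanism for cancelling the cross-terms is the identity~\eqref{eqn:summing_gradients}: for fixed $\vec{w}$ the triples $(i,j,\vec{v})$ realizing $\vec{w}$ organize $\{1,\dots,m\}$ into disjoint cycles, and summing the gradients over each cycle leaves a positive multiple of $(\vec{w}\vec{w}^t,\vec{0})$. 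The same identity is what proves Lemma~\ref{lem:eutaxy} --- that \emph{every} representation of a eutactic (not merely strongly eutactic) lattice is $m$-eutactic --- and $m$-eutaxy is exactly what you need to know that the non-$\lambda$-lowering directions are confined to $\MV(X)^{\perp}$ in the first place. You gesture at this as ``eutaxy/perfectness-type information inherited from the lattice'' without supplying it, but it is precisely the new ingredient of the paper (strengthening Lemma~9 of~\cite{schuermann-2010}); without it the reduction to ``purely translational, hence floating, hence zero'' does not go through. Your remaining worries are less serious: the passage from infinitesimal to finite statements is carried by Theorem~\ref{thm:m-extreme-characterization} and the cited analysis in \cite[Section 5]{schuermann-2010}, and a nonzero purely translational change automatically moves a proper subset of the translates because of the normalization $\vec{t}_m=\vec{0}$.
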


Our proofs of these theorems rely on the 
following lemma, which is a strengthening of Lemma~9 in \cite{schuermann-2010}.

\begin{lemma} \label{lem:eutaxy}
Any representation $X\in\sdmo$ of a eutactic lattice (respectively PQF) is $m$-eutactic.
\end{lemma}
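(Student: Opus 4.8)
The plan is to start from a eutactic lattice, given by a PQF $Q\in\sdo$ with minimal vectors $\Min Q = \{\vec v_1,\dots,\vec v_n\}$, and to show that for \emph{any} $m$ and \emph{any} representation $X=(Q,\vec t)\in\sdmo$ of the corresponding periodic point set (necessarily with $\vec t_i-\vec t_j\in\Z^d$ for the point set to equal the lattice, after possibly refining to a sublattice and conjugating by a basis change), the eutaxy point $(Q^{-1},\vec 0)$ lies in the relative interior of the generalized Voronoi domain $\MV(X)$. Classical eutaxy says $Q^{-1}$ is in the \emph{interior} of $\cone\{\vec v\vec v^t : \vec v\in\Min Q\}$ inside $\sd$, i.e.\ $Q^{-1} = \sum_k c_k\, \vec v_k\vec v_k^t$ with all $c_k>0$. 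The first step is to translate this into a statement about $\MV(X)$ by identifying which gradients $(\grad p_{i,j,\vec v})(X)$ with $\vec t_i-\vec t_j-\vec v\in\Min X$ are available.

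The key observation is that since the periodic form represents a lattice, the minimal vectors of $X$ split into two kinds: the ``lattice'' minimal vectors, coming from $i=j$ and $\vec v\in\Min Q$, whose gradients are $(\vec v\vec v^t,\vec 0,\dots,\vec 0)$ with vanishing translational part; and possibly ``cross'' minimal vectors with $i\neq j$, whose gradients have nonzero translational components $\pm 2Q\vec w$ in columns $i$ and $j$. To place $(Q^{-1},\vec 0)$ in $\relint\MV(X)$ I would first use the classical eutaxy coefficients $c_k>0$ to write $(Q^{-1},\vec 0) = \sum_k c_k(\vec v_k\vec v_k^t,\vec 0) = \sum_k c_k(\grad p_{i,i,-\vec v_k})(X)$, which shows $(Q^{-1},\vec 0)\in\MV(X)$ as a positive combination of the lattice gradients. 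The remaining work is to check this combination lies in the \emph{relative interior}, not merely in the cone. For that I need to understand the linear span $\lin\MV(X)$ and verify that a strictly positive combination of a spanning subset of generators (obtained by adding, if necessary, extra minimal-vector gradients with positive coefficients and then subtracting them off $(Q^{-1},\vec 0)$ using the slack in the $c_k$) still equals $(Q^{-1},\vec 0)$.

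The clean way to handle the relative interior is this: a point is in $\relint\cone\{g_1,\dots,g_N\}$ iff it can be written as $\sum \mu_\ell g_\ell$ with \emph{all} $\mu_\ell>0$. So I would pair each cross-gradient $g$ with its partner $g'=(\grad p_{j,i,-\vec v})(X)$ (same $\sd$-component $\vec w\vec w^t$, opposite-signed translational parts, since $\vec t_j-\vec t_i+\vec v = -\vec w$ and $(-\vec w)(-\vec w)^t=\vec w\vec w^t$); their sum $g+g'$ has translational part zero and $\sd$-part $2\vec w\vec w^t$. Hence for every cross minimal vector I can add $\varepsilon(g+g') = \varepsilon\,(2\vec w\vec w^t,\vec 0)$ to the representation with a small $\varepsilon>0$, and then compensate by decreasing the lattice coefficients, using that $\vec w\vec w^t$ for a cross minimal vector $\vec w\in\Min X$ must itself lie in the cone generated by $\{\vec v_k\vec v_k^t\}$ — indeed $Q[\vec w]=1=\lambda(Q)$ forces $\vec w\in\Min Q$ up to the ambient-lattice identification, so $\vec w\vec w^t$ is one of the $\vec v_k\vec v_k^t$. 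After this adjustment every generator of $\MV(X)$ appears with a strictly positive coefficient while the $\sd$-component still sums to $Q^{-1}$ (absorbing the extra $2\varepsilon\vec w\vec w^t$ terms into the slack of the $c_k$, which is possible for $\varepsilon$ small because all $c_k>0$) and the translational component still vanishes. That exhibits $(Q^{-1},\vec 0)\in\relint\MV(X)$, i.e.\ $X$ is $m$-eutactic.

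The main obstacle I anticipate is the bookkeeping around the ambient lattice: a representation $X\in\sdmo$ of a lattice need not have $L=A\Z^d$ equal to the lattice itself — it may be a sublattice, with the $m$ translates filling out the full lattice, so $\Min Q$ in the form's coordinates is the image of the lattice minima under a possibly non-surjective basis change, and one must be careful that every cross minimal vector $\vec w=\vec t_i-\vec t_j-\vec v$ really does correspond to a genuine minimal vector of the original lattice (this is where $\delta(X)$ being the lattice's density and $\lambda(X)=\lambda(Q)$ get used). Once that correspondence is pinned down, so that $\vec w\vec w^t\in\cone\{\vec v_k\vec v_k^t : k\}$ for every cross minimal vector, the relative-interior argument above goes through by a compactness/finiteness argument (finitely many minimal vectors, hence finitely many generators, hence a uniform $\varepsilon$).
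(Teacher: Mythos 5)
Your overall target is the right one --- exhibit $(Q^{-1},\vec{0})$ as a combination of the generators of $\MV(X)$ in which \emph{every} generator carries a strictly positive coefficient --- but the mechanism you propose for killing the translational components is broken. You pair $g=(\grad p_{i,j,\vec{v}})(X)$ with $g'=(\grad p_{j,i,-\vec{v}})(X)$ and claim $g+g'=(2\vec{w}\vec{w}^t,\vec{0})$ because the translational parts have opposite signs. They do not: $p_{j,i,-\vec{v}}$ is literally the same polynomial as $p_{i,j,\vec{v}}$ (since $Q[\vec{t}_j-\vec{t}_i+\vec{v}]=Q[-(\vec{t}_i-\vec{t}_j-\vec{v})]$), so $g'=g$ and $g+g'=2g$ still has $\pm 4Q\vec{w}$ in columns $i$ and $j$. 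The paper's proof uses a genuinely different cancellation: for a fixed $\vec{w}\in\Min X$, because the periodic set is a lattice it is invariant under translation by $\vec{w}$, so the rule ``$\vec{t}_i-\vec{w}$ lies in the translate indexed by $j$'' defines a permutation of $\{1,\dots,m\}$ whose cycles partition the indices; summing $(\grad p_{i,j,\vec{v}})(X)$ around each cycle telescopes the $+2Q\vec{w}$ and $-2Q\vec{w}$ contributions in every column and leaves a positive multiple of $(\vec{w}\vec{w}^t,\vec{0})$. That cycle identity (equation~\eqref{eqn:summing_gradients}) is the heart of the lemma and is missing from your argument; without it, once a single cross gradient is present you have no way to return to the translational zero section.

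The base step of your construction is also not available in general. Writing $(Q^{-1},\vec{0})=\sum_k c_k(\grad p_{k,k,-\vec{v}_k})(X)$ requires every minimal vector of the lattice to occur as an $i=j$ (``lattice'') minimal vector of the representation, i.e.\ to lie in the sublattice $L'=A\Z^d$ underlying $X$. In the only nontrivial case --- $L'$ a proper sublattice of $L$ --- this typically fails: for $L=\Z^2$ represented as four translates of $L'=2\Z^2$, none of $\pm\vec{e}_1,\pm\vec{e}_2$ lies in $L'$, so \emph{every} minimal vector is a cross vector and your starting decomposition has no terms at all. (Relatedly, your parenthetical claim that a representation of a lattice necessarily has $\vec{t}_i-\vec{t}_j\in\Z^d$ is exactly wrong in this case, and $Q^X\neq Q$ in the sublattice coordinates; the eutaxy relation must be transported through the basis change $L'\hookrightarrow L$ before the coefficients $\alpha_{\vec{x}}$ can be reused.) The paper avoids both problems at once: the cycle identity converts the full family of gradients attached to one $\vec{w}$ into $2m(\vec{w}\vec{w}^t,\vec{0})$, and then the classical eutaxy coefficients, rescaled by $1/2m$, give the required strictly positive combination over all of $\Min X$ in one step, with no separation into lattice and cross contributions.
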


\begin{proof}
Let $Q\in\sdo$ be eutactic,
that is 
\begin{equation}  \label{eqn:eutaxy-condition}
Q^{-1} = \sum_{\vec{x}\in\Min Q} \alpha_{\vec{x}}\vec{x}\vec{x}^t
\end{equation}
for some choice of $\alpha_{\vec{x}} > 0$.

Let $X=(Q^X,\vec{t}^X)\in\sdmo$ be some representation of~$Q$, 
e.g. with $m>1$.
For a fixed $\vec{w}\in \Min X$ we define an abstract graph, 
whose vertices are the indices in $\{1,\ldots,m\}$.
Two vertices $i$ and $j$ are connected by an edge 
whenever there is some $\vec{v}\in \Z^d$ such that $\vec{w}=\vec{t}^X_i-\vec{t}^X_j-\vec{v}$.
This graph is a disjoint union of cycles (see the proof of Lemma~9 in~\cite{schuermann-2010} for details). 
So $\vec{w}$ induces a partition $(I_1,\dots,I_k)$ of $\{1,\ldots,m\}$.
Let~$I$ be an index set of this partition 
(containing the indices of a fixed cycle of the defined graph).
Summing over all triples $(i,j,\vec{v})$ with $i,j\in I$ and $\vec{v}\in\Z^d$ 
such that $\vec{w}=\vec{t}^X_i-\vec{t}^X_j-\vec{v}\in\Min X$,
we find (using~\eqref{eqn:pijv-gradient}):
$$
\sum_{ \genfrac{}{}{0pt}{2}{(i,j,\vec{v}) \in I^2\times\Z^d}{\mbox{\tiny with} \; \vec{v}=\vec{t}^X_i-\vec{t}^X_j-\vec{w}}} 
(\grad p_{i,j,\vec{v}})(X)
=
2|I| (\vec{w}\vec{w}^t, \vec{0})
.
$$
The factor $2$ comes from the symmetry 
$\grad p_{i,j,\vec{v}}=\grad p_{j,i,-\vec{v}}$.
Summation over all index sets $I$ of the partition yields
\begin{equation}   \label{eqn:summing_gradients}
\sum_{\genfrac{}{}{0pt}{2}{(i,j,\vec{v}) \in \{1,\ldots,m\}^2\times\Z^d}{\mbox{\tiny with} \; \vec{v}=\vec{t}^X_i-\vec{t}^X_j-\vec{w}}} 
(\grad p_{i,j,\vec{v}})(X)
= 2 m (\vec{w}\vec{w}^t, \vec{0})
.
\end{equation}

Each $\vec{w}\in \Min X$ corresponds to a unique $\vec{x}\in \Min Q$.
We set $\alpha_{\vec{w}} = \alpha_{\vec{x}}$ with $\alpha_{\vec{x}}$ from
the eutaxy condition~\eqref{eqn:eutaxy-condition}.
Multiplying~\eqref{eqn:summing_gradients} by $\alpha_{\vec{w}}/2m$ and
summing over all $\vec{w}\in \Min X$ yields
$$
(Q^{-1},\vec{0})
\; = \;
\sum_{\genfrac{}{}{0pt}{2}{\vec{w}\in\Min X,   (i,j,\vec{v}) \in \{1,\ldots,m\}^2\times\Z^d}{\mbox{\tiny with} \; \vec{v}=\vec{t}^X_i-\vec{t}^X_j-\vec{w} }}
(\alpha_{\vec{w}}/2m)
(\grad p_{i,j,\vec{v}})(X)
.
$$
Thus $X$ is $m$-eutactic.
\end{proof}

\begin{proof}[Proof of Theorem~\ref{thm:main-periodic}]
We can give a proof that is 
almost identical to the proof of Theorem~10 in~\cite{schuermann-2010}.
This Theorem states that a strongly eutactic and perfect lattice is periodic extreme.
By Lemma~\ref{lem:eutaxy} we can substitute
``strongly eutactic'' in its proof by ``eutactic''. 
Eutactic and perfect lattices, however,
are precisely the extreme lattices 
by a classical characterization of Voronoi~\cite{voronoi-1907}.
Thus we obtain that extreme lattices are periodic extreme.
The opposite implication follows from the definition.
\end{proof}

\begin{proof}[Proof of Theorem~\ref{thm:main-strict-periodic}]
We first translate Definition~\ref{def:floating} into the parameter space of 
periodic forms: $X\in \sdmo$ is called {\em floating} 
if there exists a purely translational change
$N=(0,\vec{t}^N)\not=0$ with 
$\lambda (X+\epsilon N)\geq \lambda (X)$ for $\epsilon$ 
on some intervall $[0,\epsilon_0]$ with $\epsilon_0>0$.

If a lattice (or PQF) is strict periodic extreme it is clearly extreme and non-floating.
We therefore consider an extreme lattice, respectively a PQF $Q\in\sdo$ 
which is non-floating, meaning it has no representation as a periodic form $X\in \sdmo$
that is floating.
Let $X=(Q^X,\vec{t}^X)\in\sdmo$ be a representation of $Q$.
As extreme PQFs $Q$ are in particular eutactic, we find by Lemma~\ref{lem:eutaxy} that $X$ is $m$-eutactic for any possible choice of~$m$.
If $X$ is also $m$-perfect in each case, 
we know by Theorem~\ref{thm:m-extreme-characterization}
that $X$ is also $m$-extreme for any possible choice of~$m$.
Hence, $Q$~would be strict periodic extreme.

So let us therefore assume that $X$ is not $m$-perfect.
By definition, the generalized Voronoi domain~$\MV(X)$ 
is not full dimensional in this case. 
As explained in \cite[Section 5]{schuermann-2010}, 
the assumption that $X$ is $m$-eutactic implies that
the only possible local changes $N\in\sdm$ of~$X$ 
that do not lower~$\lambda$ are of the form
\begin{equation} \label{eqn:N-not-0}
N=(Q^N,\vec{t}^N)\in \MV(X)^{\perp} \quad \mbox{with} \quad N\not=0
.
\end{equation}
By choosing $N$ in  $\MV(X)^{\perp}$ we in particular find
$$\langle N, (\grad p_{i,j,\vec{v}})(X) \rangle = 0$$
for all triples $(i,j,\vec{v})$ with $\vec{w}=\vec{t}^X_i-\vec{t}^X_j-\vec{v}\in \Min X$.
Using equation~\eqref{eqn:summing_gradients}, which we obtained in the proof of Lemma~\ref{lem:eutaxy},
we get $\langle N, (\vec{w}\vec{w}^t,\vec{0}) \rangle = Q^N[\vec{w}]=0$ 
for every fixed $\vec{w}\in \Min X$.
As $Q$ is perfect, the set 
$$
\left\{
\vec{w}\vec{w}^t
\; : \;
\vec{w}\in \Min X
\right\}
$$
has full rank $\binom{d+1}{2}$, implying $Q^N=0$.
So $N$ represents a purely translational change.
By the assumption that $Q$ is non-floating this is 
only possible for $N=0$, contradicting the choice of~$N$ in~\eqref{eqn:N-not-0}.
\end{proof}

%
%

\bibliographystyle{amsplain}

\providecommand{\bysame}{\leavevmode\hbox to3em{\hrulefill}\thinspace}
\providecommand{\MR}{\relax\ifhmode\unskip\space\fi MR }
\providecommand{\MRhref}[2]{%
  \href{http://www.ams.org/mathscinet-getitem?mr=#1}{#2}
}
\providecommand{\href}[2]{#2}

\end{document}